\documentclass[11pt,a4paper,reqno]{amsart}

\usepackage[T1]{fontenc}
\usepackage{lmodern}
\usepackage{microtype}
\usepackage{mathtools,amssymb}
\usepackage[authoryear,round]{natbib}
\usepackage[a4paper,textwidth=138mm,textheight=223mm,centering,headsep=8mm]{geometry}

\allowdisplaybreaks[2]
\numberwithin{equation}{section}

\newtheoremstyle{oldplain}
  {6pt}{6pt}{\itshape}{} {\scshape}{.}{0.5em}{}
\newtheoremstyle{olddefinition}
  {6pt}{6pt}{\normalfont}{} {\scshape}{.}{0.5em}{}
\theoremstyle{oldplain}
\newtheorem{theorem}{Theorem}[section]
\newtheorem{lemma}[theorem]{Lemma}
\newtheorem{corollary}[theorem]{Corollary}
\theoremstyle{olddefinition}
\newtheorem{remark}[theorem]{Remark}

\newcommand{\R}{\mathbb R}
\newcommand{\E}{\mathbb E}
\newcommand{\Pp}{\mathbb P}
\newcommand{\one}{\mathbf 1}
\newcommand{\su}{\mathrm{su}}
\newcommand{\Unif}{\operatorname{Unif}}
\newcommand{\Bin}{\operatorname{Bin}}

\title[Dimension comparison for Student's statistic]
      {Dimension comparison for Student's statistic under symmetric unimodality}
\author{Jacopo Lenzi}
\address{Department of Biomedical and Neuromotor Sciences\\
University of Bologna\\
Via San Giacomo 12, 40126 Bologna, Italy}
\email{jacopo.lenzi2@unibo.it}
\thanks{ORCID: 0000-0003-2882-4223.}
\date{August 13, 2026}
\subjclass[2020]{Primary 62G35, 62E20; Secondary 62F25, 60E15}
\keywords{cone probability, Edgeworth expansion, least favorable distribution,
majorization, self-normalized sum, Student's statistic, symmetric unimodality}

\begin{document}

\begin{abstract}
Let $q_n(r)$ denote the tail probability at $r$ of the self-normalized sum of
$n$ independent centered uniform variables. At $r=3$, the first
distribution-sensitive term in the two-sided Edgeworth expansion of
Student's statistic vanishes. We evaluate the expansion at the common moving
boundary $r_n=3+\lambda/n$ in dimensions $n$ and $n-k$. Uniformly over
deletion ranks retaining a fixed positive fraction of observations, the first
nonzero difference converges to an explicit phase surface
$H_\delta(\lambda)$; its zero curve unifies fixed, sublinear and fixed-fraction
deletions, with tangent crossing $12/35$. Through the Khintchine scale-mixture
representation, this comparison yields a single compactly supported
$C^\infty$ symmetric unimodal parent, independent of $n$, whose Student tail
exceeds the equal-scale uniform tail for all sufficiently large $n$ along
nominal levels tending to $2\{1-\Phi(\sqrt3)\}$ from below. In contrast, a
quantile-ratio order shows that the uniform parent maximizes every even moment
and every convergent even power series with nonnegative coefficients. We also
derive the fixed-confidence dimension expansion and an exact reversal at
$n=6$.
\end{abstract}

\maketitle

\section{Introduction}\label{sec:introduction}

Let $X_1,\ldots,X_n$ be independent and identically distributed from a law
symmetric about a center $\theta$, and write
\begin{equation}
T_n=\frac{\sqrt n\,(\bar X_n-\theta)}{S_n},
\qquad
S_n^2=\frac1{n-1}\sum_{i=1}^n(X_i-\bar X_n)^2.
\label{eq:1.1}
\end{equation}
The parameter is the center of symmetry.  It agrees with the mean whenever
the latter exists, but the finite-sample event considered below requires no
moment assumption.  Let $\mathcal F_{\su}$ denote the class of nondegenerate
laws having a density symmetric about zero and nonincreasing on $[0,\infty)$.
The scale of a centered uniform law is immaterial for $T_n$.

The behavior of Student's statistic under nonnormal symmetric parents has a
long history.  Early expansions were obtained by \citet{chung1946}, and the
geometric and moment comparison under orthant symmetry was developed by
\citet{efron1969}.  The general validity theory for smooth functions of
sample means was established by \citet{bhattacharyaGhosh1978}.  Remote-tail
comparisons based on stretching and majorization were proved by
\citet{benjamini1983}, while \citet{hall1987} obtained the Edgeworth expansion
under minimal moment conditions.

\citet[Chapter~5]{basu1991} studied finite-sample Student coverage over the
broader class of symmetric unimodal laws. Using the Khintchine mixture
representation, \citet[Section~5.4.1, pp.~84--87]{basu1991} reduced the
problem to numerical minimization over independent centered uniforms with
unequal scales. Computations through $n=10$ identified simulation-based,
dimension-dependent thresholds, with selected values reported in Table~5.2,
at and above which the equal-scale uniform was identified numerically as the
minimizer. Below the corresponding threshold, Basu reported, on the basis of
that numerical investigation, that $p\delta_0+(1-p)\Unif[-1,1]$, for a
suitable $p$, has smaller central coverage than the uniform parent (p.~87).
In Section~5.4.2 (pp.~87--89), Hall's expansion gives the kurtosis-dependent
contribution $\kappa y(y^2-3)\phi(y)/(6n)$, whose sign changes at $y=\sqrt3$;
Table~5.4 compares the approximation with exact normal or simulated
nonnormal coverage values. A published account of the confidence-interval
analysis appears in \citet{basu1995}.
In the symmetric unimodal class, the stretching theorem of
\citet{benjamini1983} gives exact least favorability of the uniform parent
when the Student critical value satisfies $t\ge n-1$.
\citet{hendriks2006} later expressed the Edgeworth boundary in nominal-tail
form as
\begin{equation}
\alpha_0=2\bar\Phi(\sqrt3)=0.0832645\ldots,
\qquad \bar\Phi=1-\Phi,
\label{eq:1.2}
\end{equation}
and proposed uniform calibration throughout the range $0<\alpha<\alpha_0$.
If $c^U_{n,\alpha}$ is defined by
$\Pp_U\{|T_n|>c^U_{n,\alpha}\}=\alpha$, the corresponding finite-sample
assertion is
\begin{equation}
\Pp_F\{|T_n|>c^U_{n,\alpha}\}\le \alpha
\quad
(F\in\mathcal F_{\su},\ n\ge2,\ 0<\alpha<\alpha_0).
\label{eq:1.3}
\end{equation}
The exact stretching results concern a more remote range of critical values,
whereas Basu's reported numerical thresholds and atom-at-zero comparison
reach substantially smaller values. At $y=\sqrt3$ the first
distribution-sensitive correction vanishes. Evaluating the expansion at the
common boundary $r_n=3+\lambda/n$ in dimensions $n$ and $n-k$ produces a
two-parameter comparison: $\lambda$ measures displacement within the
critical layer and $k/n$ records the loss of dimension. The limiting phase
surface below gives the sign uniformly over deletion ranks that retain a
fixed positive fraction of the observations. Its zero curve contains the
fixed and sublinear tangent regime as well as the fixed-fraction regime, and
the Khintchine mixture transfer converts the atom-at-zero mechanism into one
fixed smooth parent.

Higher-order expansions for Studentized statistics were developed by
\citet{finner2010} and \citet{gerlovina2021}. The recent work of
\citet{beckedorf2025} supplies explicit self-normalized coefficients and
strong approximation bounds, while the validity theorem of
\citet{bhattacharyaGhosh1978} covers the moving two-sided sets used here.
These results provide the expansion input. Subtracting the expansions at two
dimensions while keeping the moving cone boundary common produces the phase
surface.

For $U_i\stackrel{\mathrm{i.i.d.}}{\sim}\Unif[-1,1]$, put
\begin{equation}
q_n(r)=\Pp\left\{
\frac{(U_1+\cdots+U_n)^2}{U_1^2+\cdots+U_n^2}>r
\right\}.
\label{eq:1.4}
\end{equation}
At $r_n=3+\lambda/n$ and deletion fraction $k/n\to\delta<1$, the leading
comparison is
\begin{equation}
\frac{n^3}{k}\{q_{n-k}(r_n)-q_n(r_n)\}
=H_{k/n}(\lambda)+o(1),
\label{eq:1.5}
\end{equation}
where
\begin{equation}
H_\delta(\lambda)
=\sqrt3\phi(\sqrt3)
\left\{-\frac{3\lambda}{10(1-\delta)}
+\frac{9(2-\delta)}{175(1-\delta)^2}\right\}.
\label{eq:1.6}
\end{equation}
The zero is
\begin{equation}
\lambda_c(\delta)=\frac6{35}\frac{2-\delta}{1-\delta}.
\label{eq:1.7}
\end{equation}
For fixed or sublinear deletion rank this gives $\lambda_c(0)=12/35$; for a
fixed retained fraction it gives the full curve.  Conditioning a two-scale
Khintchine mixture on the number of observations at its larger scale then
produces a single fixed smooth parent for which the inequality in (\ref{eq:1.3}) is
reversed for every sufficiently large $n$ along a sequence
$\alpha_n<\alpha_0$.

Two further results place the tail reversal within a broader order
structure. Inverting the same expansion at fixed confidence gives a separate
dimension phase diagram. The usual two-sided $5\%$ calibration has limiting
cone parameter $\{\Phi^{-1}(0.975)\}^2=3.841458\ldots$ and is therefore
asymptotically separated from the $r=3+O(n^{-1})$ layer in (\ref{eq:1.5}).
Independently, Efron's moment argument can be combined with a quantile-ratio
order: the uniform parent maximizes every even moment and every convergent
even power series with nonnegative coefficients within $\mathcal F_{\su}$.
Thus the reversal coexists with exact uniform extremality for a cone of
analytic even functionals, whereas indicator tails lie outside that cone.

The main arguments are analytic. Appendix~\ref{app:n6} gives an independent
finite reversal at $n=6$.

\section{Cone probabilities and scale mixtures}\label{sec:cone}

Put
\begin{equation}
Z_n=\frac{\sum_{i=1}^n(X_i-\theta)}
          {\{\sum_{i=1}^n(X_i-\theta)^2\}^{1/2}}.
\label{eq:2.1}
\end{equation}
Whenever the denominator is nonzero,
\begin{equation}
T_n^2=\frac{(n-1)Z_n^2}{n-Z_n^2}.
\label{eq:2.2}
\end{equation}
If all observations are equal, we define rejection to be false.  This event
has probability zero under every density in $\mathcal F_{\su}$.
For $0<r<n$, let
\begin{equation}
C_{n,r}=\left\{x\in\R^n:
\left(\sum_{i=1}^n x_i\right)^2
>r\sum_{i=1}^n x_i^2\right\}.
\label{eq:2.3}
\end{equation}
For independent $U_i\sim\Unif[-1,1]$, define
\begin{equation}
Q_{n,r}(a_1,\ldots,a_n)
=\Pp\{(a_1U_1,\ldots,a_nU_n)\in C_{n,r}\},
\qquad
q_n(r)=Q_{n,r}(1,\ldots,1).
\label{eq:2.4}
\end{equation}
The Student threshold $t$ and the cone parameter are related by
\begin{equation}
r=\frac{nt^2}{n-1+t^2},
\qquad
t^2=\frac{(n-1)r}{n-r}.
\label{eq:2.5}
\end{equation}
This cone formulation is classical; the uniform case was studied, for
example, by \citet{perlo1933}, and related geometry appears in
\citet{hotelling1961}.

Every law in $\mathcal F_{\su}$ admits the Khintchine representation
\begin{equation}
X\stackrel d=RU,
\label{eq:2.6}
\end{equation}
where $U$ is centered uniform, $R\ge0$, and $R$ and $U$ are independent; see
\citet[Chapter~1]{dharmadhikari1988}.  Conditional on independent copies of
$R$, the cone probability is (\ref{eq:2.4}).  This is also the scale-mixture coordinate
system used by \citet[Chapter~5]{basu1991}.  At zero lower scale, the two-point
mixing law below is precisely the atom-at-zero perturbation used there.  The
next lemma identifies its first variation at zero mixing weight and transfers
a strict gain to a smooth density.

\begin{lemma}[first variation at a vanishing scale]
Suppose $n\ge2$, $0<r<n$, and $q_{n-1}(r)>q_n(r)$.  Then there is a compactly
supported $C^\infty$ density $f$, symmetric and nonincreasing on
$[0,\infty)$, such that
\begin{equation}
\Pp_f(C_{n,r})>q_n(r).
\label{eq:2.7}
\end{equation}
\end{lemma}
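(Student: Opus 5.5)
We plan to use the Khintchine representation \eqref{eq:2.6} twice: first with a two-point mixing law, to extract the first variation, and then with a smoothed mixing law, to obtain a $C^\infty$ density. If $r\ge n-1$, then $q_{n-1}(r)=0$ (by Cauchy--Schwarz $(\sum_{i\le n-1}x_i)^2\le(n-1)\sum_{i\le n-1}x_i^2$), so the hypothesis fails. Hence we may assume $r<n-1$, and $q_{n-1}(r)$ is a genuine cone probability in dimension $n-1$.

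\emph{Step 1 (atom at zero).} Let $R$ equal $0$ with probability $p$ and $1$ with probability $1-p$. A zero coordinate does not change either $\sum x_i$ or $\sum x_i^2$. So, conditionally on $j$ zero scales, the cone event is exactly $C_{n-j,r}$ for the remaining uniforms, and its probability is $q_{n-j}(r)$, with the convention $q_m(r)=0$ for $m\le r$. Therefore
\[
\Pp_p(C_{n,r})=\sum_{j=0}^n\binom nj p^j(1-p)^{n-j}q_{n-j}(r),
\]
a polynomial in $p$ with derivative $n\{q_{n-1}(r)-q_n(r)\}>0$ at $p=0$. We fix a small $p_0>0$ and obtain $\Pp_{p_0}(C_{n,r})>q_n(r)$ with a strict margin $\eta>0$.

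\emph{Step 2 (continuity in the scales).} The map $(a_1,\dots,a_n)\mapsto Q_{n,r}(a)$ is continuous on $[0,\infty)^n$ at every point where the surviving coordinates give a nondegenerate uniform vector in their own dimension. The reason is that the boundary $\{(\sum x_i)^2=r\sum x_i^2\}$ is a null set for the relevant product of uniforms, so dominated convergence applies after writing $a_iU_i$. In particular $Q_{n,r}(\varepsilon,\dots)\to Q_{n,r}(0,\dots)$ as $\varepsilon\downarrow0$. Replacing the scale $0$ by a small $\varepsilon>0$ therefore changes $\Pp(C_{n,r})=\E\,Q_{n,r}(R_1,\dots,R_n)$ by less than $\eta/3$. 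We now have a two-point mixing law $G_0$ on $\{\varepsilon,1\}\subset(0,\infty)$.

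\emph{Step 3 (smoothing).} Take mixing laws $G_m$ with $C^\infty$ densities $g_m$ supported in a fixed compact subset of $(0,\infty)$ and converging weakly to $G_0$. The corresponding density is
\[
f_m(x)=\int_{|x|}^\infty\frac{g_m(s)}{2s}\,ds .
\]
It is symmetric, nonincreasing on $[0,\infty)$, compactly supported, and $C^\infty$: it is constant near $0$ because $g_m$ vanishes there, and its derivative $-\operatorname{sgn}(x)\,g_m(|x|)/(2|x|)$ is smooth away from $0$. Because $Q_{n,r}$ is bounded and continuous on the product of the supports, weak convergence of $G_m^{\otimes n}$ gives $\Pp_{f_m}(C_{n,r})\to\E_{G_0}Q_{n,r}$. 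Hence for large $m$ the strict inequality \eqref{eq:2.7} survives.

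The main point needing care is the continuity of $Q_{n,r}$, in particular the claim that the cone boundary is null. The boundary is a quadric, and we must check it is not degenerate, i.e.\ not the whole space, in the surviving dimension. This holds since $0<r<m$ for the relevant dimension $m$: the form $(\sum x_i)^2-r\sum x_i^2$ is then a nonzero quadratic form, and its zero set has Lebesgue measure zero.
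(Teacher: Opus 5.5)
Your Step~2 makes a continuity claim that is false in general, although the conclusion survives. Consider the term of $\E Q_{n,r}(R_1,\ldots,R_n)$ in which all $n$ scales are small. It has no surviving coordinates, so the form $(\sum x_i)^2-r\sum x_i^2$ restricted to the surviving variables is identically zero. Indeed $Q_{n,r}(\varepsilon,\ldots,\varepsilon)=q_n(r)$ for every $\varepsilon>0$ by scale invariance, whereas $Q_{n,r}(0,\ldots,0)=0$.

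The same failure occurs when $r=1$ and $n\ge3$, a case the hypothesis does not exclude. The term with one surviving unit scale has form $(1-r)y^2\equiv0$, and $Q_{n,1}(\varepsilon,\ldots,\varepsilon,1)\to\Pp\{U_n\sum_{i<n}U_i>0\}=1/2\neq0=q_1(1)$. Your nondegeneracy check covered only dimensions $m>r$. So the assertion that replacing $0$ by $\varepsilon$ changes the probability by less than $\eta/3$ is unjustified for these two terms.

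The repair is one line. In both exceptional terms the $\varepsilon$-value exceeds the zero-scale value, so you only need a one-sided bound, and your continuity argument gives that for every other term. The paper sidesteps the issue by reversing the order of limits. It fixes $\varepsilon$ and differentiates $P_{p,\varepsilon}$ at $p=0$, so it needs continuity only at $(0,1,\ldots,1)$. There the single degenerate pair $(n,r)=(2,1)$ is ruled out by the hypothesis.

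Your Step~3 is a genuinely different, and somewhat cleaner, smoothing step. The paper convolves the two-level density with a symmetric decreasing approximate identity. That relies on the classical fact that convolution preserves symmetric unimodality, followed by total-variation continuity of product laws. You instead smooth the mixing law and take $f_m(x)=\int_{|x|}^\infty g_m(s)/(2s)\,ds$. Symmetry, monotonicity, compact support and $C^\infty$ regularity are then read off directly from the Khintchine form. The cost is that you need continuity of $Q_{n,r}$ on compact subsets of $(0,\infty)^n$ and weak convergence of the product mixing laws, and you supply both correctly.
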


\begin{proof}
Let $R_{p,\varepsilon}$ equal $\varepsilon$ with probability $p$ and one with
probability $1-p$, independently of $U$, and put $X=R_{p,\varepsilon}U$.
The density of $X$ is
\begin{equation}
f_{p,\varepsilon}(x)
=\frac{p}{2\varepsilon}\one_{\{|x|<\varepsilon\}}
 +\frac{1-p}{2}\one_{\{|x|<1\}}.
\label{eq:2.8}
\end{equation}
It is symmetric and nonincreasing in $|x|$.  Conditioning on the number of
observations from the smaller scale gives
\begin{equation}
P_{p,\varepsilon}(r)
=\sum_{j=0}^n\binom{n}{j}p^j(1-p)^{n-j}
Q_{n,r}(\underbrace{\varepsilon,\ldots,\varepsilon}_{j},1,\ldots,1).
\label{eq:2.9}
\end{equation}
Except when $(n,r)=(2,1)$, the restriction of the cone boundary to the
coordinate subspace obtained by setting one coordinate to zero is the zero
set of a nonzero quadratic form and therefore has Lebesgue measure zero.
The exceptional pair cannot satisfy the premise, since
$q_1(1)=0<q_2(1)=1/2$.  Hence in every case covered by the premise,
\begin{equation}
Q_{n,r}(\varepsilon,1,\ldots,1)\longrightarrow q_{n-1}(r)
\qquad(\varepsilon\downarrow0).
\label{eq:2.10}
\end{equation}
Differentiating (\ref{eq:2.9}) at $p=0$ gives
\begin{equation}
\left.\frac{\partial}{\partial p}P_{p,\varepsilon}(r)\right|_{p=0}
=n\{Q_{n,r}(\varepsilon,1,\ldots,1)-q_n(r)\}>0
\label{eq:2.11}
\end{equation}
for all sufficiently small $\varepsilon$.  A sufficiently small positive $p$
then proves (\ref{eq:2.7}) for the density (\ref{eq:2.8}).

Finally convolve (\ref{eq:2.8}) with a compactly supported $C^\infty$ symmetric
decreasing approximate identity.  Convolution preserves symmetric decrease,
and product total variation tends to zero with the one-dimensional
$L^1$ distance.  The strict inequality therefore persists.
\end{proof}

\section{Dimension comparison near the sign-change boundary}\label{sec:dimension}

Rescale the uniforms to variance one and write
\begin{equation}
X_i\sim\Unif[-\sqrt3,\sqrt3],
\qquad
Z_n=\frac{\sum_iX_i}{(\sum_iX_i^2)^{1/2}}.
\label{eq:3.1}
\end{equation}
The common scale has no effect on $Z_n$. Both probabilities in the comparison
below are evaluated at the boundary indexed by the larger dimension $n$.
This common-boundary convention places fixed, sublinear and fixed-fraction
deletion ranks in one asymptotic statement.

\begin{theorem}[dimension comparison]
Fix $\lambda\in\R$ and put
\begin{equation}
r_n=3+\frac\lambda n.
\label{eq:3.2}
\end{equation}
Then
\begin{equation}
q_n(r_n)
=\alpha_0-\frac{\phi(\sqrt3)\lambda}{\sqrt3\,n}
+O(n^{-2}).
\label{eq:3.3}
\end{equation}
Let
\begin{equation}
a=-\frac{3\sqrt3\phi(\sqrt3)}{10},
\qquad
b=\frac{9\sqrt3\phi(\sqrt3)}{175},
\label{eq:3.4}
\end{equation}
and, for $0\le\delta<1$,
\begin{equation}
H_\delta(\lambda)
=\frac{a\lambda}{1-\delta}
+\frac{b(2-\delta)}{(1-\delta)^2}.
\label{eq:3.5}
\end{equation}
For every $\gamma_0\in(0,1)$,
\begin{equation}
\sup_{1\le k\le(1-\gamma_0)n}
\left|
\frac{n^3}{k}\{q_{n-k}(r_n)-q_n(r_n)\}
-H_{k/n}(\lambda)
\right|\longrightarrow0.
\label{eq:3.6}
\end{equation}
The convergence is locally uniform in $\lambda$.  The unique zero of the
limiting affine function is
\begin{equation}
\lambda_c(\delta)=\frac6{35}\frac{2-\delta}{1-\delta}.
\label{eq:3.7}
\end{equation}
Thus, whenever $1\le k_n<n$ eventually and $k_n/n\to\delta<1$, the difference
$q_{n-k_n}(r_n)-q_n(r_n)$ is eventually positive for
$\lambda<\lambda_c(\delta)$ and eventually negative for
$\lambda>\lambda_c(\delta)$.
\end{theorem}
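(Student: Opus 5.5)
The plan is to reduce everything to a single Edgeworth expansion of $q_m(r)$ in powers of $m^{-1}$. The expansion must be valid uniformly in $r$ near $3$, carried to third order, and have remainder $o(m^{-3})$. Both dimensions $m=n$ and $m=n-k$ are then handled by the same formula. Write $Z_m$ as a smooth function of the bivariate mean of $(X_i,X_i^2)$ with $X_i\sim\Unif[-\sqrt3,\sqrt3]$, and note that $\{|Z_m|>\sqrt r\}=\{(\sum X_i)^2>r\sum X_i^2\}$ is a two-sided cone. I would invoke the validity theorem of \citet{bhattacharyaGhosh1978}, in the form used for self-normalized sums by \citet{hall1987} and \citet{beckedorf2025}. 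Its hypotheses hold here. Cram\'er's condition is satisfied because $(X,X^2)$ has an absolutely continuous component in $\R^2$. All moments exist. The moving sets have smooth boundaries, so their $\varepsilon$-neighbourhoods have Gaussian measure $O(\varepsilon)$ uniformly for $r$ in a compact neighbourhood of $3$. By symmetry the odd-order terms cancel, giving
\begin{equation*}
q_m(r)=G(r)+\frac{g_1(r)}m+\frac{g_2(r)}{m^2}+\frac{g_3(r)}{m^3}+o(m^{-3}),
\qquad G(r)=2\bar\Phi(\sqrt r),
\end{equation*}
uniformly in $r\in[2,4]$. Each $g_j$ is a polynomial in $\sqrt r$ times $\phi(\sqrt r)$, and is therefore smooth.

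The next step is to identify $g_1$ and $g_2$ explicitly. They are computed from the cumulants of the uniform law, namely $\kappa_4=-6/5$ and $\kappa_6=48/7$ in standardized form, together with the distribution-free contribution. The latter is the $\mathrm{Beta}(1/2,(m-1)/2)$ law of $Z_m^2/m$ under a normal parent, expanded in $m^{-1}$. The kurtosis part of $g_1$ is proportional to $\kappa_4\sqrt r(r-3)\phi(\sqrt r)$, and I expect the full $g_1$ to vanish at $r=3$; this is the sign-change property recalled in the introduction. Granting $g_1(3)=0$, expanding at $r_n=3+\lambda/n$ gives $G(r_n)=\alpha_0-\phi(\sqrt3)\lambda/(\sqrt3\,n)+O(n^{-2})$, because $\frac{d}{dr}2\bar\Phi(\sqrt r)=-\phi(\sqrt r)/\sqrt r$. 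The remaining terms contribute $O(n^{-2})$, which proves (\ref{eq:3.3}). The constants are then
\begin{equation*}
a=g_1'(3),\qquad b=g_2(3).
\end{equation*}
Verifying the stated closed forms $-3\sqrt3\phi(\sqrt3)/10$ and $9\sqrt3\phi(\sqrt3)/175$ is a finite symbolic computation. I would cross-check it against the coefficients tabulated in \citet{beckedorf2025}.

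For (\ref{eq:3.6}), put $m=n-k$ and $\delta=k/n\le1-\gamma_0$, so that $m\ge\gamma_0 n$ and all remainders are uniform in $k$. The $G$ terms cancel exactly because the boundary is common. Two elementary identities then do the work:
\begin{equation*}
\frac1{n-k}-\frac1n=\frac{k}{n^2}\frac{1}{1-\delta},
\qquad
\frac1{(n-k)^2}-\frac1{n^2}=\frac{k}{n^3}\frac{2-\delta}{(1-\delta)^2}.
\end{equation*}
Taylor expansion gives $g_1(r_n)=a\lambda/n+O(\lambda^2n^{-2})$ and $g_2(r_n)=b+O(\lambda/n)$. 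Combining these, $\frac{n^3}{k}\{q_{n-k}(r_n)-q_n(r_n)\}$ equals $H_\delta(\lambda)$ plus three kinds of error:
\begin{itemize}
\item $O(n^{-1})$ terms coming from the Taylor remainders;
\item the third-order difference $g_3(r_n)\{(n-k)^{-3}-n^{-3}\}\cdot n^3/k=O(n^{-1})$;
\item $n^3k^{-1}\,o(n^{-3})=o(1)$, using $k\ge1$.
\end{itemize}
All of these are uniform in $k$ and locally uniform in $\lambda$. Solving $H_\delta(\lambda)=0$ gives $\lambda_c(\delta)=-b(2-\delta)/\{a(1-\delta)\}=\frac{6}{35}\frac{2-\delta}{1-\delta}$. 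Since $a<0$ and $H$ depends continuously on $\delta$, the eventual sign statement follows.

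The main obstacle is the first step, and within it two points. The first is securing the expansion to order $m^{-3}$ with an $o(m^{-3})$ remainder that is uniform over the moving sets. This third-order accuracy is needed precisely because $k$ may be fixed, so the signal $k/n^3$ is itself only of order $n^{-3}$. The second is correctly assembling $g_1'(3)$ and $g_2(3)$. Here the normal-theory Beta correction must be combined with the cumulant corrections without sign slips. I would first verify $g_1(3)=0$ numerically against exact $q_m$ values before trusting the symbolic $b$.
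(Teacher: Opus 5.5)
Your proof of (\ref{eq:3.6}) is the paper's proof. The common boundary cancels the $N$-terms. Your two denominator identities are the first two terms of (\ref{eq:3.29}), the third-order term is $O(n^{-1})$, and the remainder satisfies $\frac{n^3}{k}|R_{n-k}(r_n)-R_n(r_n)|\le(\gamma_0^{-3}+1)\varepsilon_n$ because $m\ge\gamma_0 n$ and $k\ge1$.

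\textbf{The gap.} The quantitative content of the theorem is never derived. You only ``grant'' $g_1(3)=0$. Without it, $ng_1(r_n)/(1-\delta)$ diverges, and neither (\ref{eq:3.3}) nor (\ref{eq:3.6}) holds. You also defer $g_1'(3)=a$ and $g_2(3)=b$, and hence $\lambda_c(\delta)$, to an unspecified symbolic computation. The route you sketch, a Beta baseline plus ``cumulant corrections'', does not determine these values either. The parent-dependent terms for a self-normalized sum are not the textbook Edgeworth terms for $\sqrt m\,\bar X$. For example, adding $\frac{\kappa_4}{12}\phi H_3$ to the Student baseline $-\frac12\phi H_3$ gives $a=-\frac35\sqrt3\phi(\sqrt3)$, twice the true value. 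The random denominator changes the kurtosis correction to $-\frac{\kappa_4}{6}\phi H_3$. At second order you would need the $\kappa_4$, $\kappa_6$ and $\kappa_4^2$ polynomials of the self-normalized expansion, and you do not supply them.

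\textbf{The paper's device.} The paper conditions on the magnitudes. Then $Z_n=\sum_i\varepsilon_i\sqrt{W_i}$ is a Rademacher sum with conditional variance one and conditional characteristic function $\prod_i\cos(t\sqrt{W_i})$. Hence $\kappa_{4,n}=-2\E A_2$ and $\kappa_{6,n}=16\E A_3$. Expanding $(n+D_n)^{-2}$ gives $\E A_2=\frac9{5n}+\frac{36}{175n^2}+O(n^{-3})$ and $\E A_3=\frac{27}{7n^2}+O(n^{-3})$. The Hermite form (\ref{eq:3.21}) then yields $A(r)=-\frac3{10}\phi(\sqrt r)H_3(\sqrt r)$. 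So $A(3)=0$ and $A'(3)=a$ follow automatically, and no separate Beta computation is needed because the normal baseline already sits inside $\E A_2$. Likewise, $H_5(\sqrt3)=-6\sqrt3$ and $H_7(\sqrt3)=48\sqrt3$ give $B(3)=\sqrt3\phi(\sqrt3)\bigl(\frac{27}{25}-\frac{36}{35}\bigr)=b$. Alternatively, you could substitute $\mu_4=\frac95$ and $\mu_6=\frac{27}7$ into the symmetric coefficients (\ref{eq:3.36}). Either way, this step has to be carried out.

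\textbf{A smaller error.} Your justification of Cram\'er's condition is wrong as stated. $(X,X^2)$ is carried by a parabola, so it has no absolutely continuous component in $\R^2$. The condition still holds because $1,x,x^2$ are linearly independent on the support; for instance, $(X_1+X_2,X_1^2+X_2^2)$ has a density. This is how the paper verifies the Bhattacharya--Ghosh hypothesis.
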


\begin{corollary}[tangent and fixed-fraction limits]
If $1\le k_n=o(n)$, then
\begin{equation}
\frac{n^3}{k_n}\{q_{n-k_n}(r_n)-q_n(r_n)\}
\longrightarrow
\sqrt3\phi(\sqrt3)
\left(\frac{18}{175}-\frac{3\lambda}{10}\right).
\label{eq:3.8}
\end{equation}
If $m_n/n\to\gamma\in(0,1)$, then
\begin{equation}
\begin{split}
n^2\{q_{m_n}(r_n)-q_n(r_n)\}
\longrightarrow{}&
\sqrt3\phi(\sqrt3)
\left\{-\frac{3\lambda}{10}(\gamma^{-1}-1)
+\frac9{175}(\gamma^{-2}-1)\right\}.
\end{split}
\label{eq:3.9}
\end{equation}
The corresponding zeros are $12/35$ and $6(1+\gamma)/(35\gamma)$.
\end{corollary}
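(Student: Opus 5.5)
The corollary is a direct specialization of the dimension comparison theorem. The plan has two ingredients. First, the uniform convergence (3.6) lets us evaluate the limit along any admissible sequence of deletion ranks. Second, $H_\delta(\lambda)$ in (3.5) is continuous in $\delta$ on $[0,1)$. No new asymptotic input beyond the theorem should be needed; the only work is bookkeeping with the constants $a$ and $b$ from (3.4).

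\emph{Tangent regime.} Let $1\le k_n=o(n)$ and fix $\gamma_0=1/2$. Eventually $k_n\le n/2$, so (3.6) applies along the sequence and gives
\[
\frac{n^3}{k_n}\{q_{n-k_n}(r_n)-q_n(r_n)\}=H_{k_n/n}(\lambda)+o(1).
\]
Since $k_n/n\to0$ and $\delta\mapsto H_\delta(\lambda)$ is continuous at $\delta=0$, the right side tends to $H_0(\lambda)=a\lambda+2b$. Substituting (3.4) gives $\sqrt3\phi(\sqrt3)\{-3\lambda/10+18/175\}$, which is (3.8). Its zero is $\lambda_c(0)=12/35$ by (3.7).

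\emph{Fixed-fraction regime.} Let $m_n/n\to\gamma\in(0,1)$, and put $k_n=n-m_n$ and $\delta=1-\gamma\in(0,1)$. Then $k_n/n\to\delta$, so eventually $1\le k_n\le(1-\gamma_0)n$ for any $\gamma_0<\gamma$. Write
\[
n^2\{q_{m_n}(r_n)-q_n(r_n)\}=\frac{k_n}{n}\cdot\frac{n^3}{k_n}\{q_{n-k_n}(r_n)-q_n(r_n)\}.
\]
By (3.6) and continuity of $H_\delta$ in $\delta$, the right side converges to $\delta H_\delta(\lambda)$. It remains to check two algebraic identities:
\[
\frac{\delta}{1-\delta}=\gamma^{-1}-1,
\qquad
\frac{\delta(2-\delta)}{(1-\delta)^2}=\frac{1-(1-\delta)^2}{(1-\delta)^2}=\gamma^{-2}-1.
\]
Inserting $a$ and $b$ then yields (3.9).

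The zero of the limit in (3.9) is the zero of $H_{1-\gamma}$, because the factor $\delta$ is positive. By (3.7) this is
\[
\lambda_c(1-\gamma)=\frac{6}{35}\cdot\frac{1+\gamma}{\gamma}.
\]
The only point requiring care, which is minor, is to verify that each sequence eventually lies in the range $1\le k\le(1-\gamma_0)n$ covered by (3.6).
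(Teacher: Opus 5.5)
Your proposal is correct and follows the paper's own argument. For (\ref{eq:3.8}) you specialize the uniform comparison (\ref{eq:3.6}) along $k_n/n\to0$. For (\ref{eq:3.9}) you set $k_n=n-m_n$, multiply (\ref{eq:3.6}) by $k_n/n\to1-\gamma$ and simplify; your check that both sequences eventually lie in the admissible range $1\le k\le(1-\gamma_0)n$, and the identities $\delta/(1-\delta)=\gamma^{-1}-1$ and $\delta(2-\delta)/(1-\delta)^2=\gamma^{-2}-1$, are the bookkeeping the paper leaves implicit.
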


\begin{corollary}[crossings]
Fix $\gamma_0\in(0,1)$.  For all sufficiently large $n$ and every integer
$1\le k\le(1-\gamma_0)n$, there is a point $r_{n,k}^*$ such that
\begin{equation}
q_{n-k}(r_{n,k}^*)=q_n(r_{n,k}^*)
\label{eq:3.10}
\end{equation}
and the points may be selected so that
\begin{equation}
\sup_{1\le k\le(1-\gamma_0)n}
\left|n(r_{n,k}^*-3)-\lambda_c(k/n)\right|\longrightarrow0.
\label{eq:3.11}
\end{equation}
Only existence is asserted.
\end{corollary}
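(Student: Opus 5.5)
The statement to prove is the crossings corollary, which follows from the dimension comparison theorem (eq:3.6) by an intermediate value argument in $\lambda$. For each $n$ and $k$, consider the function
\[
D_{n,k}(\lambda)=\frac{n^3}{k}\{q_{n-k}(3+\lambda/n)-q_n(3+\lambda/n)\}.
\]
The plan has three steps.

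\textbf{Step 1: continuity.} Show that $D_{n,k}$ is continuous in $\lambda$ near the relevant range. Each $q_m(r)$ is the probability that $(\sum U_i)^2-r\sum U_i^2>0$. For $m\ge2$ and $0<r<m$, the boundary quadric $\{(\sum x_i)^2=r\sum x_i^2\}$ has Lebesgue measure zero; this is the same nonzero-quadratic-form remark used in the proof of the first-variation lemma. Hence $r\mapsto q_m(r)$ is continuous on $(0,m)$ by dominated convergence.

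For $k\le(1-\gamma_0)n$ we have $n-k\ge\gamma_0 n$. Once $n$ is large, $n-k\ge 4$ and $3+\lambda/n<n-k$ for all $\lambda$ in any fixed compact interval. So $D_{n,k}$ is continuous on that interval.

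\textbf{Step 2: uniform sign control at two points.} The limit $\lambda_c(\delta)=\frac{6}{35}\frac{2-\delta}{1-\delta}$ is continuous and increasing on $[0,1-\gamma_0]$. Hence all $\lambda_c(k/n)$ lie in the compact interval
\[
\Lambda=[12/35,\ \lambda_c(1-\gamma_0)].
\]
The slope $a/(1-\delta)$ of $H_\delta$ satisfies $a/(1-\delta)\le a<0$, since $a<0$. So for any $\eta>0$ and all $\delta\in[0,1-\gamma_0]$,
\[
H_\delta(\lambda_c(\delta)-\eta)\ge |a|\eta,\qquad H_\delta(\lambda_c(\delta)+\eta)\le -|a|\eta.
\]
Now use eq:3.6 with its local uniformity in $\lambda$ on the compact set $\Lambda+[-1,1]$. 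This gives
\[
\sup_{1\le k\le(1-\gamma_0)n}\ \sup_{\lambda\in\Lambda+[-1,1]}|D_{n,k}(\lambda)-H_{k/n}(\lambda)|\to0.
\]
Here I read eq:3.6 as jointly uniform over $k$ and over $\lambda$ in compacts. That joint uniformity is what "locally uniform" must deliver, and checking it against the theorem is the only delicate point. If it were available only for fixed $\lambda$, I would instead evaluate at finitely many grid points. Because $H_\delta$ is affine with slopes bounded away from zero and infinity, a grid of mesh $\eta/2$ suffices: for each $k$, pick grid points $\lambda^-<\lambda_c(k/n)-\eta/2$ and $\lambda^+>\lambda_c(k/n)+\eta/2$ within distance $\eta$ of $\lambda_c(k/n)$, which needs only finitely many uniform-in-$k$ convergences.

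Consequently, given $\eta\in(0,1)$, there is $N(\eta)$ such that for all $n\ge N(\eta)$ and all admissible $k$,
\[
D_{n,k}(\lambda_c(k/n)-\eta)>0>D_{n,k}(\lambda_c(k/n)+\eta).
\]

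\textbf{Step 3: intermediate value theorem and diagonalization.} For $n\ge N(\eta)$, the intermediate value theorem yields $\lambda^*_{n,k}\in(\lambda_c(k/n)-\eta,\ \lambda_c(k/n)+\eta)$ with $D_{n,k}(\lambda^*_{n,k})=0$. Set $r^*_{n,k}=3+\lambda^*_{n,k}/n$; then eq:3.10 holds and
\[
|n(r^*_{n,k}-3)-\lambda_c(k/n)|<\eta.
\]
To obtain eq:3.11, take $\eta_j=1/j$ and choose $N_j\ge N(1/j)$ increasing. For $N_j\le n<N_{j+1}$, select the crossing using $\eta_j$. Then the supremum in eq:3.11 is at most $1/j\to0$.

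Only existence is claimed, so no uniqueness argument is needed. This also explains why the selection, rather than uniqueness, of the crossing points is what the statement records.
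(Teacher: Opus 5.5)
Your proposal is correct and follows the paper's proof. Both arguments use continuity of $q_{n-k}-q_n$ near $r=3$, then sign control uniform in $k$ from (\ref{eq:3.6}) at $\lambda_c(k/n)\pm\eta$ via the slope bound $|a|/(1-\delta)\ge|a|$, then the intermediate value theorem; the only difference is that the paper selects a zero nearest $3+\lambda_c(k/n)/n$ in the $\eta=1$ interval, which works simultaneously for every $\eta$, where you diagonalize over $\eta_j=1/j$.
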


\begin{proof}
For each $n$ and $k$, the difference $q_{n-k}(r)-q_n(r)$ is continuous near
$r=3$.  Fix $\eta>0$.  Since $\lambda_c$ is bounded on
$[0,1-\gamma_0]$, apply (\ref{eq:3.6}) at
\[
r=3+\frac{\lambda_c(k/n)-\eta}{n}
\quad\hbox{and}\quad
r=3+\frac{\lambda_c(k/n)+\eta}{n}.
\]
The limiting values have opposite signs uniformly in $k$, because
$\partial H_\delta(\lambda)/\partial\lambda=a/(1-\delta)<0$ and its absolute
value is bounded away from zero.  The intermediate value theorem gives a
zero between the two points.  Choose a zero nearest
$3+\lambda_c(k/n)/n$ in the interval obtained with $\eta=1$.  Repeating the
argument for every fixed $0<\eta<1$ proves (\ref{eq:3.11}).
\end{proof}

\subsection{Conditional cumulants}

Write $X_i=\varepsilon_i\sqrt{V_i}$, where the signs are independent
Rademacher variables and $V_i=X_i^2$.  Put
\begin{equation}
W_i=\frac{V_i}{\sum_jV_j},
\qquad
A_j=\sum_iW_i^j.
\label{eq:3.12}
\end{equation}
Conditionally on the magnitudes,
\begin{equation}
Z_n=\sum_i\varepsilon_i\sqrt{W_i}.
\label{eq:3.13}
\end{equation}
Since the conditional mean is zero and the conditional variance is one, the
fourth and sixth unconditional cumulants are
\begin{equation}
\kappa_{4,n}=-2\E A_2,
\qquad
\kappa_{6,n}=16\E A_3.
\label{eq:3.14}
\end{equation}
For the variance-one uniform law,
\begin{equation}
\E V_i=1,
\qquad
\E V_i^2=\frac95,
\qquad
\E V_i^3=\frac{27}{7}.
\label{eq:3.15}
\end{equation}
Let $D_n=\sum_i(V_i-1)$.  On $\{|D_n|\le n/2\}$, expansion of the relevant
negative powers of $n+D_n$ gives, with $\mu_j=\E V_i^j$,
\begin{equation}
\begin{split}
\E A_2
&=\frac1{n^2}\E\left[\sum_iV_i^2
\left\{1-\frac{2D_n}{n}+\frac{3D_n^2}{n^2}\right\}\right]
+O(n^{-3})\\
&=\frac{\mu_2}{n}
+\frac{-2(\mu_3-\mu_2)+3\mu_2(\mu_2-1)}{n^2}
+O(n^{-3}).
\end{split}
\label{eq:3.16}
\end{equation}
The complementary event is exponentially small because $0\le V_i\le3$.
The same ratio expansion yields
\begin{equation}
\E A_2=\frac{9}{5n}+\frac{36}{175n^2}+O(n^{-3}),
\quad
\E A_3=\frac{27}{7n^2}+O(n^{-3}),
\quad
\E(A_2^2)=\frac{81}{25n^2}+O(n^{-3}).
\label{eq:3.17}
\end{equation}
Consequently,
\begin{equation}
\kappa_{4,n}=-\frac{18}{5n}-\frac{72}{175n^2}+O(n^{-3}),
\qquad
\kappa_{6,n}=\frac{432}{7n^2}+O(n^{-3}).
\label{eq:3.18}
\end{equation}

\subsection[The expansion at r=3]{The expansion at $r=3$}

The conditional characteristic function has the exact product form
\[
\E(e^{itZ_n}\mid W)=\prod_{i=1}^n\cos(t\sqrt{W_i}),
\]
and hence
\begin{equation}
\log\E(e^{itZ_n}\mid W)
=-\frac{t^2}{2}-\frac{t^4}{12}A_2-\frac{t^6}{45}A_3+O_t(A_4).
\label{eq:3.19}
\end{equation}
On $\{\sum_iV_i\ge n/2\}$, $\max_iW_i\le6/n$; its complement is
exponentially unlikely.  Since $\E A_4$, $\E(A_2A_3)$, and $\E A_2^3$ are
$O(n^{-3})$, exponentiation gives
\begin{equation}
\E e^{itZ_n}=e^{-t^2/2}\left\{
1-\frac{t^4}{12}\E A_2-\frac{t^6}{45}\E A_3
+\frac{t^8}{288}\E(A_2^2)+O_t(n^{-3})\right\}.
\label{eq:3.20}
\end{equation}
Formal inversion therefore identifies, for $x$ in a fixed compact subset of
$(0,\infty)$,
\begin{equation}
\begin{split}
\Pp\{|Z_n|>x\}
={}&2\bar\Phi(x)
 +\frac{\kappa_{4,n}}{12}\phi(x)H_3(x)\\
&+\frac{\kappa_{6,n}}{360}\phi(x)H_5(x)
 +\frac{\kappa_{4,n}^2}{576}\phi(x)H_7(x)
 +O(n^{-3}),
\end{split}
\label{eq:3.21}
\end{equation}
where $H_j$ is the probabilists' Hermite polynomial.  In the last displayed
term, replacing $\E(A_2^2)$ by $(\E A_2)^2$ changes the expansion only by
$O(n^{-3})$.

We next record the remainder used in the comparison.  Apply Theorem~2(a) and
(1.15) of \citet{bhattacharyaGhosh1978}, together with its correction
\citep{bhattacharyaGhosh1980}, with $s=8$, to the smooth function
\begin{equation}
H(u,v)=u/\sqrt v
\label{eq:3.22}
\end{equation}
of the sample means of $(X,X^2)$. Choose a globally $C^8$ extension that
agrees with $H$ on a fixed neighborhood of $(0,1)$. Since the uniform law is
bounded, the sample means leave that neighborhood with probability
$O(e^{-cn})$ for some $c>0$, which is negligible at the order used below.
Its density is positive on the interior of its support, the functions
$1,x,x^2$ are linearly independent there, $H$ is $C^8$ near $(0,1)$, and
$\nabla H(0,1)=(1,0)$.  If $\psi_{8,n}$ denotes their formal Edgeworth
density, their theorem gives
\begin{equation}
\sup_{D\in\mathcal B(\R)}
\left|\Pp\{Z_n\in D\}-\int_D\psi_{8,n}(z)\,dz\right|
=o(n^{-3}).
\label{eq:3.23}
\end{equation}
Thus moving two-sided tail sets are covered.  Through order $n^{-3}$ the even
conditional characteristic function involves only fixed monomials in
$A_2,A_3,A_4$; their expectations have expansions in integer powers of
$n^{-1}$ by the same ratio calculation as in (\ref{eq:3.16}).  All half-integer terms
therefore vanish.  More generally, for every fixed compact set $K\subset(0,\infty)$, uniformly for $r\in K$, we may write
\begin{equation}
q_j(r)=N(r)+\frac{A(r)}j+\frac{B(r)}{j^2}
      +\frac{C(r)}{j^3}+R_j(r),
\qquad
\sup_{r\in K}|R_j(r)|=o(j^{-3}),
\label{eq:3.24}
\end{equation}
where
\begin{equation}
N(r)=2\bar\Phi(\sqrt r),
\qquad
A(r)=-\frac3{10}\phi(\sqrt r)H_3(\sqrt r).
\label{eq:3.25}
\end{equation}
Until Section~\ref{sec:fixed-confidence} only a compact neighborhood of three is used.
The explicit coefficients in \citet{beckedorf2025}, specialized to the
symmetric uniform law, agree with (\ref{eq:3.21}).  In particular, at $x=\sqrt3$,
\begin{equation}
H_3(x)=0,
\qquad
H_5(x)=-6\sqrt3,
\qquad
H_7(x)=48\sqrt3,
\label{eq:3.26}
\end{equation}
and substitution of (\ref{eq:3.18}) gives
\begin{equation}
B(3)=\frac{9\sqrt3\phi(\sqrt3)}{175}.
\label{eq:3.27}
\end{equation}
This use of \citet{beckedorf2025} concerns the coefficient formulas.  The
uniformity over deletion ranks below is obtained from the one-dimensional
remainder in (\ref{eq:3.24}).

\subsection{Proof of the dimension comparison}

Differentiating (\ref{eq:3.25}) at $r=3$ gives
\begin{equation}
A'(3)=-\frac{3\sqrt3\phi(\sqrt3)}{10},
\qquad
N'(3)=-\frac{\phi(\sqrt3)}{\sqrt3}.
\label{eq:3.28}
\end{equation}
Put $m=n-k$ and $\delta=k/n$.  Exact denominator algebra in (\ref{eq:3.24}) gives
\begin{equation}
\begin{split}
\frac{n^3}{k}\{q_{n-k}(r_n)-q_n(r_n)\}
={}&\frac{nA(r_n)}{1-\delta}
+B(r_n)\frac{2-\delta}{(1-\delta)^2}\\
&+\frac{C(r_n)}n
\frac{3-3\delta+\delta^2}{(1-\delta)^3}
+\frac{n^3}{k}\{R_{n-k}(r_n)-R_n(r_n)\}.
\end{split}
\label{eq:3.29}
\end{equation}
Taylor expansion gives
\begin{equation}
nA(r_n)=a\lambda+O(n^{-1}),
\qquad
B(r_n)=b+O(n^{-1}).
\label{eq:3.30}
\end{equation}
If $1\le k\le(1-\gamma_0)n$, then $m\ge\gamma_0n$.  The $C$ term is
$O(n^{-1})$ uniformly.  To treat the remainder, put
\[
\varepsilon_N=
\sup_{j\ge\gamma_0N}\,j^3\sup_{r\in K}|R_j(r)|,
\]
where $K$ is a fixed compact neighborhood of three.  Then
$\varepsilon_N\to0$ and
\[
\sup_{1\le k\le(1-\gamma_0)n}
\frac{n^3}{k}|R_{n-k}(r_n)-R_n(r_n)|
\le (\gamma_0^{-3}+1)\varepsilon_n.
\]
The first two terms in (\ref{eq:3.29}) differ from $H_\delta(\lambda)$ by $O(n^{-1})$
uniformly for $0\le\delta\le1-\gamma_0$ and locally uniformly in $\lambda$.
This proves (\ref{eq:3.6}).  Solving $H_\delta(\lambda)=0$ gives (\ref{eq:3.7}), while (\ref{eq:3.3})
follows from $A(3)=0$ and the expansion of $N(r_n)$.

If $k_n=o(n)$, let $k_n/n\to0$ in (\ref{eq:3.6}) to obtain (\ref{eq:3.8}).  If
$m_n/n\to\gamma\in(0,1)$, put $k_n=n-m_n$ and multiply (\ref{eq:3.6}) by
$k_n/n\to1-\gamma$; simplification gives (\ref{eq:3.9}).
\qed

\begin{remark}[the excluded retained-dimension boundary]
If $m_n\to\infty$ and $m_n/n\to0$, the same expansion gives
\begin{equation}
m_n^2\{q_{m_n}(r_n)-q_n(r_n)\}
\longrightarrow \frac{9\sqrt3\phi(\sqrt3)}{175}.
\label{eq:3.31}
\end{equation}
The condition $m_n\to\infty$ is essential: a fixed retained dimension is a
different boundary regime.
\end{remark}

\subsection{One parent for all sufficiently large dimensions}

\begin{corollary}[fixed parent]
Fix $\gamma\in(0,1)$ and
\begin{equation}
0<\lambda<\frac6{35}\frac{1+\gamma}{\gamma}.
\label{eq:3.32}
\end{equation}
There exist a compactly supported $C^\infty$ density $f$, independent of $n$,
symmetric and nonincreasing on $[0,\infty)$, and a constant
$d_f(\lambda)>0$ such that, with
\begin{equation}
r_n=3+\frac\lambda n,
\qquad
t_n^2=\frac{(n-1)r_n}{n-r_n},
\qquad
\alpha_n=q_n(r_n),
\label{eq:3.33}
\end{equation}
one has
\begin{equation}
\Pp_f\{|T_n|>t_n\}-\alpha_n
=\frac{d_f(\lambda)}{n^2}+o(n^{-2})>0
\label{eq:3.34}
\end{equation}
for every sufficiently large $n$, while $\alpha_n<\alpha_0$.
\end{corollary}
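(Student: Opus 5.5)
The plan is to take the parent to be a smoothed version of the two-scale Khintchine mixture from the proof of the first-variation lemma. The weight on the larger scale is fixed at $\gamma$, so $R$ equals $1$ with probability $\gamma$ and $\varepsilon$ with probability $1-\gamma$. I would run the whole argument through a fixed-parent Edgeworth expansion rather than through $n$-dependent perturbations. Uniformity in $n$ then comes for free, and the dependence on $\varepsilon$ and on the smoothing enters only through finitely many moments.

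\emph{Step 1: the atom limit.} First treat the formal limit $\varepsilon=0$, where the smaller scale becomes an atom at zero. Coordinates equal to zero contribute to neither $\sum x_i$ nor $\sum x_i^2$, and $r_n$ is unchanged. Hence, conditionally on the number $M\sim\Bin(n,\gamma)$ of observations at scale one, the cone probability is exactly $q_M(r_n)$, and the total probability is $\E q_M(r_n)$.
\begin{itemize}
\item On $\{M\ge\gamma n/2\}$, insert (\ref{eq:3.24}) and use $\E M^{-1}=(\gamma n)^{-1}+(1-\gamma)(\gamma n)^{-2}+O(n^{-3})$ and $\E M^{-2}=(\gamma n)^{-2}+O(n^{-3})$.
\item The complementary event has exponentially small probability, and $q_M\le1$.
\item Since $A(r_n)=a\lambda/n+O(n^{-2})$, the variance correction in $\E M^{-1}$ multiplies $A(r_n)$ and contributes only $O(n^{-3})$.
\end{itemize}
This gives
\[
n^2\{\E q_M(r_n)-q_n(r_n)\}\to a\lambda(\gamma^{-1}-1)+b(\gamma^{-2}-1),
\]
which is the right side of (\ref{eq:3.9}). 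It is positive exactly under (\ref{eq:3.32}), because that condition says $\lambda<6(1+\gamma)/(35\gamma)$, the zero recorded in the tangent and fixed-fraction corollary. Also $\alpha_n<\alpha_0$ follows at once from (\ref{eq:3.3}) when $\lambda>0$.

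\emph{Step 2: a fixed smooth parent.} Next take $f$ to be the density (\ref{eq:2.8}) with $p=1-\gamma$ and small $\varepsilon>0$, convolved with a fixed small symmetric decreasing $C^\infty$ compactly supported mollifier, as in the lemma. This $f$ is bounded, has a density, and is independent of $n$. So the Bhattacharya--Ghosh theorem applies to $H(u,v)=u/\sqrt v$ exactly as in (\ref{eq:3.23}), with the same verification: the Cramér-type condition holds since the law has a density, and $1,x,x^2$ are linearly independent on its support. The same conditional-cumulant computation as in (\ref{eq:3.14})--(\ref{eq:3.21}) then gives
\[
\Pp_f\{|Z_n|>\sqrt{r_n}\}=N(r_n)+\frac{A_f(r_n)}n+\frac{B_f(r_n)}{n^2}+o(n^{-2}).
\]
Here $A_f=\kappa_4(f)\phi H_3/12$ with $\kappa_4(f)=-2\mu_2(f)$ after normalising $\E X^2=1$. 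The coefficient $B_f$ is a fixed polynomial in the normalised moments $\mu_2,\mu_3$ of $V=X^2$ evaluated through the ratio expansion (\ref{eq:3.16}).

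Because $H_3(\sqrt{r_n})=O(\lambda/n)$, the $n^{-1}$ term also contributes at order $n^{-2}$. Subtracting (\ref{eq:3.24}) for the uniform law then yields
\[
\Pp_f\{|T_n|>t_n\}-\alpha_n=\frac{d_f(\lambda)}{n^2}+o(n^{-2}),
\]
where $d_f(\lambda)$ is an explicit continuous function of finitely many normalised moments of $f$ that is affine in $\lambda$.

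\emph{Step 3: continuity.} As the mollifier width and $\varepsilon$ tend to zero, these normalised moments converge to those of the atom-plus-uniform law, because the atom only rescales moments of the uniform component by powers of $\gamma$. I would then check that the same cumulant formula applied to that limiting law reproduces the Step~1 constant $a\lambda(\gamma^{-1}-1)+b(\gamma^{-2}-1)$. The check is that the moment-based coefficients depend continuously on the law, together with the consistency of the two computations of the $n^{-2}$ term for the singular mixture. Taking $\varepsilon$ and the smoothing small enough then makes $d_f(\lambda)>0$. Once $f$ is fixed, the $o(n^{-2})$ remainder gives the strict inequality for all large $n$.

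\emph{Main obstacle.} The hardest step is the consistency check in Step~3. The atom law has no density, so it is not covered by (\ref{eq:3.23}), and I do not take the moment-polynomial expression for its coefficient for granted. I would first try to verify the identity algebraically: the coefficient produced by the cumulant expansion with $\mu_j(f)\to\gamma^{1-j}\mu_j(U)$ should be the same polynomial as the one obtained from $\E M^{-1}$ and $\E M^{-2}$ in Step~1. If that bookkeeping becomes awkward, the fallback is to work directly with the explicit moment formula for $d_f$ and check its sign in the limit. Since the sign is all that is needed, this suffices.
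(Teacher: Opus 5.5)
Your proposal is correct and follows essentially the same route as the paper: binomial conditioning for the atom-at-zero law, a fixed-parent Edgeworth expansion whose $n^{-2}$ coefficient $d_F(\lambda)$ is an explicit function of $\mu_4,\mu_6$ that is affine in $\lambda$, and continuity in $\varepsilon$ and in the mollifier width. The consistency check you single out as the main obstacle is only algebra, and the paper settles it by exactly your fallback: substituting $\mu_4=9/(5\gamma)$ and $\mu_6=27/(7\gamma^2)$ into (\ref{eq:3.37}) returns $(1-\gamma)H_{1-\gamma}(\lambda)=a\lambda(\gamma^{-1}-1)+b(\gamma^{-2}-1)$, which is positive under (\ref{eq:3.32}).
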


\begin{proof}
First consider the singular scale law $X_0=R_0U$, where
$U\sim\Unif[-\sqrt3,\sqrt3]$,
$\Pp(R_0=1)=\gamma$, and $\Pp(R_0=0)=1-\gamma$.  Conditional on the number
$M_n\sim\Bin(n,\gamma)$ of nonzero observations, its tail is
$q_{M_n}(r_n)$.  Put $q_m(r)=0$ when $r\ge m$, and $q_0(r)=0$ under the
all-zero convention.  Binomial concentration and the uniform comparison
(\ref{eq:3.6}) give
\begin{equation}
n^2\{\E q_{M_n}(r_n)-q_n(r_n)\}
\longrightarrow (1-\gamma)H_{1-\gamma}(\lambda)>0.
\label{eq:3.35}
\end{equation}
Indeed, (\ref{eq:3.6}) is uniform when $M_n/n$ lies in a fixed neighborhood of
$\gamma$, while the complementary occupancies have exponentially small
probability.

We next replace the atom by a positive scale.  For a symmetric variance-one
parent $F$ with bounded density and sufficiently many moments, write
$\mu_j=\E_FX^j$.  The explicit symmetric coefficients of
\citet{beckedorf2025} give
\begin{equation}
A_F'(3)=-\frac{\sqrt3\phi(\sqrt3)}6\mu_4,
\qquad
B_F(3)=\sqrt3\phi(\sqrt3)
\left(\frac{\mu_4^2}{3}-\frac{4\mu_6}{15}\right).
\label{eq:3.36}
\end{equation}
Moreover $A_F(3)=0$, since the first coefficient contains the factor
$H_3(\sqrt3)$.  Their remainder, for the compactly supported densities used
below, is $o(n^{-2})$ uniformly on a fixed neighborhood of $r=3$.  Relative
to the uniform parent,
\begin{equation}
\begin{split}
d_F(\lambda)=\sqrt3\phi(\sqrt3)\biggl\{
&-\frac\lambda6\left(\mu_4-\frac95\right)
+\frac{\mu_4^2}{3}-\frac{4\mu_6}{15}-\frac9{175}
\biggr\},
\end{split}
\label{eq:3.37}
\end{equation}
and
\begin{equation}
\Pp_F\{Z_n^2>r_n\}-q_n(r_n)
=\frac{d_F(\lambda)}{n^2}+o(n^{-2}).
\label{eq:3.38}
\end{equation}

Let $R_\varepsilon$ equal one with probability $\gamma$ and $\varepsilon$
with probability $1-\gamma$, and put $X_\varepsilon=R_\varepsilon U$.
After standardization, its even moments are
\begin{equation}
\mu_{2j}(\varepsilon)
=\frac{3^j}{2j+1}
\frac{\gamma+(1-\gamma)\varepsilon^{2j}}
     {\{\gamma+(1-\gamma)\varepsilon^2\}^{j}}.
\label{eq:3.39}
\end{equation}
At $\varepsilon=0$, substitution of
$\mu_4=9/(5\gamma)$ and $\mu_6=27/(7\gamma^2)$ in (\ref{eq:3.37}) gives
$(1-\gamma)H_{1-\gamma}(\lambda)$.  By (\ref{eq:3.32}) this is positive.  Continuity
therefore gives $d_{F_\varepsilon}(\lambda)>0$ for every sufficiently small
fixed $\varepsilon>0$.  Such a law has a compactly supported symmetric
decreasing two-level density, and (\ref{eq:3.38}) holds for this one fixed parent.

Convolve the density with a compactly supported $C^\infty$ symmetric
decreasing approximate identity.  The standardized moments in (\ref{eq:3.37}) vary
continuously with the convolution width, so one sufficiently small fixed
width preserves $d_F(\lambda)>0$.  Finally (\ref{eq:3.3}) gives
$\alpha_n<\alpha_0$, and (\ref{eq:2.2})--(\ref{eq:2.5}) give (\ref{eq:3.34}).
\end{proof}

\section{Even moments and analytic transforms}\label{sec:moments}

Fix $n\ge2$. For a nondegenerate symmetric law $F$ with no atom at zero, let $a$ be the
quantile function of $|X|$ under $F$.  For an i.i.d.\ sample from $F$, write
\begin{equation}
Z_F=\frac{X_1+\cdots+X_n}{(X_1^2+\cdots+X_n^2)^{1/2}}.
\label{eq:4.1}
\end{equation}
The next result packages Efron's Rademacher moment comparison in a form
adapted to symmetric scale laws. It identifies a broad analytic class for
which the uniform law remains exactly extremal, even though the indicator-tail
order can reverse.

\begin{theorem}[quantile-ratio moment comparison]
Let $F$ and $G$ be nondegenerate symmetric laws with no atom at zero, and let
$a$ and $b$ be the quantile functions of their absolute values.  Suppose
$a(u)/b(u)$ is nondecreasing on $(0,1)$.  Then, for every integer $m\ge0$,
\begin{equation}
\E Z_F^{2m}\le \E Z_G^{2m}.
\label{eq:4.2}
\end{equation}
If $a/b$ is not almost everywhere constant, the inequality is strict for
every $m\ge2$.  Consequently, whenever $c_m\ge0$ and
$\sum_{m\ge0}c_mx^{2m}$ converges on $[-\sqrt n,\sqrt n]$,
\begin{equation}
\E\sum_{m\ge0}c_mZ_F^{2m}
\le
\E\sum_{m\ge0}c_mZ_G^{2m}.
\label{eq:4.3}
\end{equation}
\end{theorem}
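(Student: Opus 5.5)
The plan is to combine a monotone coupling of the two samples with Efron's Rademacher conditioning, which reduces \eqref{eq:4.2} to a majorization comparison between the normalized squared magnitudes. The key observation is that the conditional moment $E(Z^{2m}\mid W)$ is a Schur-concave function of $W$, and the quantile-ratio hypothesis makes $W^F$ majorize $W^G$ pointwise.

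\emph{Coupling and conditioning.} Let $U_1,\ldots,U_n$ be i.i.d.\ $\Unif(0,1)$ and let $\varepsilon_1,\ldots,\varepsilon_n$ be independent Rademacher signs. Because both laws are symmetric with no atom at zero, we may realize
\[
X_i=\varepsilon_i a(U_i),\qquad Y_i=\varepsilon_i b(U_i),
\]
with the same signs in both samples. As in \eqref{eq:3.12}--\eqref{eq:3.13}, put $W_i^F=a(U_i)^2/\sum_j a(U_j)^2$, and define $W^G$ in the same way from $b$. Then, conditionally on $U$,
\[
Z_F=\sum_i\varepsilon_i\sqrt{W_i^F},\qquad Z_G=\sum_i\varepsilon_i\sqrt{W_i^G}.
\]
Write $\varphi_m(w)=\E\bigl(\sum_i\varepsilon_i\sqrt{w_i}\bigr)^{2m}$ on the simplex. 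Then $\E Z_F^{2m}=\E\varphi_m(W^F)$ and $\E Z_G^{2m}=\E\varphi_m(W^G)$. It therefore suffices to prove two things: that $\varphi_m$ is Schur-concave, and that $W^F$ majorizes $W^G$ for every realization of $U$.

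\emph{Majorization.} We have $W_i^F\propto c(U_i)^2\,W_i^G$ with $c=a/b$ nondecreasing. Since $b$ is nondecreasing, the multipliers $c(U_i)^2$ are similarly ordered with the $W_i^G$. I will use the standard fact that if $w'_i\propto t_iw_i$ with $t$ similarly ordered to $w$, then $w'\succ w$. To check it, sort the coordinates decreasingly. The average multiplier over the top $\ell$ coordinates, weighted by $w$, is at least the overall weighted average, and this gives the partial-sum inequalities. If $c$ is not a.e.\ constant, then with positive probability two sample points lie in regions where $c$ differs. On that event the majorization is strict, in the sense that $W^F$ is not a permutation of $W^G$.

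\emph{Schur-concavity of $\varphi_m$.} This is the step I expect to be the main obstacle; my approach is Efron's two-coordinate argument. Fix $w_3,\ldots,w_n$ and put $y=\sum_{i\ge3}\varepsilon_i\sqrt{w_i}$. Define $g(S)=\E_y(S+y)^{2m}$. Because $y$ is symmetric, $g$ is even and can be written $g(S)=h(S^2)$, where $h$ is a polynomial with nonnegative coefficients; hence $h$ is convex, and strictly convex when $m\ge2$. Fix $w_1+w_2=s$ and put $p=\sqrt{w_1w_2}$. Averaging over $\varepsilon_1,\varepsilon_2$ gives
\[
\tfrac12\bigl\{h(s+2p)+h(s-2p)\bigr\}.
\]
By convexity of $h$ this is nondecreasing in $p$, strictly so for $m\ge2$. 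Since $p$ increases as $(w_1,w_2)$ become more equal, every Robin Hood transfer (from a larger to a smaller coordinate) increases $\varphi_m$. This is exactly Schur-concavity. Combined with the majorization step, it gives \eqref{eq:4.2} pointwise in $U$, and then after taking expectations. Strictness for $m\ge2$ follows from the strict version on the positive-probability event where the majorization is strict.

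\emph{Series.} Finally, $|Z|\le\sqrt n$ by Cauchy--Schwarz. The series in \eqref{eq:4.3} has nonnegative terms and converges on $[-\sqrt n,\sqrt n]$, so monotone convergence allows expectation and summation to be interchanged. Summing \eqref{eq:4.2} termwise with the weights $c_m\ge0$ then yields \eqref{eq:4.3}.
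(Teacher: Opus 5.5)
Your proposal is correct and follows essentially the same route as the paper: the common-sign monotone coupling, majorization of the normalized squared $a$-weights over the $b$-weights via similarly ordered multipliers (the paper's cross-multiplication \eqref{eq:4.5}), Efron's two-coordinate Schur-concavity with strict gain for $m\ge2$ under a nontrivial equalization, and monotone termwise summation for \eqref{eq:4.3}. The only cosmetic difference is that you get monotonicity in $p=\sqrt{w_1w_2}$ from convexity of $h$, where $\E_y(S+y)^{2m}=h(S^2)$, whereas the paper writes the pair moment as $\sum_\ell\binom{j}{2\ell}s^{j-2\ell}(4p)^\ell$ and expresses the full moment as a nonnegative combination of such pair moments.
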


\begin{proof}
On a common probability space, write
\begin{equation}
X_i^F=\varepsilon_i a(V_i),
\qquad
X_i^G=\varepsilon_i b(V_i),
\label{eq:4.4}
\end{equation}
where the $V_i$ are i.i.d.\ uniform on $(0,1)$ and the $\varepsilon_i$ are
independent Rademacher signs.  Order the $V_i$, and set
$w_i=b(V_i)^2$ and $r_i=a(V_i)^2/b(V_i)^2$.  The $r_i$ are then ordered in
the same direction.  For every upper set of indices $I$, cross multiplication
gives
\begin{equation}
\begin{split}
&\left(\sum_{i\in I}r_iw_i\right)\left(\sum_{j\notin I}w_j\right)
-\left(\sum_{i\in I}w_i\right)\left(\sum_{j\notin I}r_jw_j\right)\\
&\hspace{35mm}
=\sum_{i\in I}\sum_{j\notin I}w_iw_j(r_i-r_j)\ge0.
\end{split}
\label{eq:4.5}
\end{equation}
Thus the normalized squared $a$-weights majorize the normalized squared
$b$-weights; see \citet{marshall2011} for the majorization convention.

For completeness, the required Schur concavity of the Rademacher moments can
be seen from a two-coordinate equalization.  If $x,y\ge0$, $s=x+y$, and
$p=xy$, then
\begin{equation}
\begin{split}
\E(\varepsilon\sqrt x+\delta\sqrt y)^{2j}
&=\frac12\{(s+2\sqrt p)^j+(s-2\sqrt p)^j\}\\
&=\sum_{\ell=0}^{\lfloor j/2\rfloor}
\binom{j}{2\ell}s^{j-2\ell}(4p)^\ell.
\end{split}
\label{eq:4.6}
\end{equation}
At fixed $s$, this increases when $x$ and $y$ are made more equal.  Expanding
the full moment over the remaining signs expresses it as a nonnegative
combination of the pair moments in (\ref{eq:4.6}).  Hence the $2m$th moment of a
normalized Rademacher sum is Schur-concave in its squared coefficients, as in
\citet{efron1969}; see also \citet{eaton1970}. Combining this conditional comparison with (\ref{eq:4.5}), then
integrating over $V$, proves (\ref{eq:4.2}).

For $m\ge2$, a nontrivial equalization makes at least the $\ell=1$ term in
(\ref{eq:4.6}) strictly larger.  If $a/b$ is not almost everywhere constant, (\ref{eq:4.5})
is strict for some partial sum on an event of positive probability, which
proves the strict assertion.  Finally $|Z_F|\le\sqrt n$, so monotone termwise
summation proves (\ref{eq:4.3}).
\end{proof}

\begin{corollary}[uniform extremality for analytic transforms]
Let $F\in\mathcal F_{\su}$ and let $Z_U$ denote (\ref{eq:4.1}) for a centered-uniform
sample.  Then
\begin{equation}
\E Z_F^{2m}\le \E Z_U^{2m},
\qquad m\ge0,
\label{eq:4.7}
\end{equation}
and, for every $s,t\ge0$,
\begin{equation}
\E e^{sZ_F^2}\le \E e^{sZ_U^2},
\qquad
\E\cosh(tZ_F)\le \E\cosh(tZ_U).
\label{eq:4.8}
\end{equation}
If $F$ is not uniform up to scale, (\ref{eq:4.7}) is strict for $m\ge2$.
\end{corollary}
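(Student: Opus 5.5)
The plan is to apply the quantile-ratio moment comparison with $G$ the centered uniform law and $F\in\mathcal F_{\su}$, and then read off the two transforms in (\ref{eq:4.8}) as special cases of (\ref{eq:4.3}). Since $Z$ is scale invariant, take $U\sim\Unif[-1,1]$, so $|U|\sim\Unif(0,1)$ and $b(u)=u$. A nondegenerate $F\in\mathcal F_{\su}$ has a density and hence no atom at zero, so the hypotheses on atoms hold. It therefore remains to verify that $a(u)/u$ is nondecreasing on $(0,1)$, where $a$ is the quantile function of $|X|$ under $F$.

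\emph{Step 1 (monotonicity).} Under $F$, $|X|$ has density $2f$ on $[0,\infty)$, which is nonincreasing. Its distribution function $G_{|X|}$ is therefore concave on $[0,\infty)$. Hence its generalized inverse $a$ is convex on $(0,1)$, with $a(0+)\ge0$.

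For a convex function with $a(0+)\ge 0$, the slope $\{a(u)-a(0+)\}/u$ is nondecreasing, because chord slopes from a fixed left endpoint increase. Then
\[
\frac{a(u)}{u}=\frac{a(u)-a(0+)}{u}+\frac{a(0+)}{u}.
\]
The first term is nondecreasing, but the second is nonincreasing unless $a(0+)=0$. The main point to check is therefore that $a(0+)=0$.

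This holds because the density $2f$ is nonincreasing and integrable, so it is positive on a neighborhood of $0$. (If $f(0+)=0$, then $f\equiv0$.) Consequently $G_{|X|}(x)>0$ for every $x>0$, which forces $a(0+)=0$. One should also handle the possibility that $G_{|X|}$ is flat on a final interval; this only affects the convexity of $a$ at $u=1$, which is excluded. Alternatively one can argue directly that $G_{|X|}(x)/x$ is nonincreasing, by concavity with $G(0)=0$. Inverting then gives that $u/a(u)$ is nonincreasing.

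\emph{Step 2 (application).} With $a/b$ nondecreasing, the quantile-ratio moment comparison gives (\ref{eq:4.7}). For strictness, suppose $a(u)/u$ is almost everywhere constant. Then $a(u)=cu$, so $|X|$ is uniform and $F$ is uniform up to scale. Thus a non-uniform $F$ gives strictness for $m\ge2$.

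\emph{Step 3 (transforms).} We have $e^{sx^2}=\sum_m s^m x^{2m}/m!$ and $\cosh(tx)=\sum_m t^{2m}x^{2m}/(2m)!$. Both series have nonnegative coefficients for $s,t\ge0$ and converge everywhere, in particular on $[-\sqrt n,\sqrt n]$. So (\ref{eq:4.3}) yields (\ref{eq:4.8}).

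I expect the only delicate step to be Step 1, namely the inverse-function bookkeeping that turns the star-shaped property $G_{|X|}(x)/x\downarrow$ into $a(u)/u\uparrow$. This needs some care with flat pieces and jumps of the generalized inverse, although the direction of the inequality is preserved under inversion.
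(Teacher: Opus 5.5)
Your proposal is correct and follows essentially the same route as the paper: concavity of the distribution function of $|X|$ gives a convex magnitude quantile $a$ with $a(0)=0$, hence $a(u)/u$ nondecreasing, so the theorem applies with $b(u)=u$; (\ref{eq:4.8}) then comes from the nonnegative even power series, and strictness from the fact that an a.e.\ constant ratio forces $a$ to be linear. Your explicit check that $a(0+)=0$ (a nonincreasing density is positive near the origin) fills in a step the paper only asserts.
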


\begin{proof}
Let $G_F$ be the distribution function of $|X|$.  Symmetric unimodality makes
$G_F$ concave, so its quantile $a=G_F^{-1}$ is convex, nondecreasing, and
satisfies $a(0)=0$.  Hence $a(u)/u$ is nondecreasing.  The magnitude quantile
of a centered uniform law is proportional to $u$, and the theorem applies.
The two inequalities in (\ref{eq:4.8}) follow from their nonnegative even power
series.  Equality in all moments of order at least four requires $a$ to be
linear, which is uniformity up to scale.
\end{proof}

The indicator $\one_{\{x^2>r\}}$ has no representation of the form used in
(\ref{eq:4.3}).  Thus (\ref{eq:4.7})--(\ref{eq:4.8}) do not imply an ordering of the two-sided tails.
This fixed-dimension extremal statement within the symmetric unimodal class
is distinct from the broader transform bounds for self-normalized symmetric
sums in \citet{borisov2024}.

\section{Fixed-confidence dimension order}\label{sec:fixed-confidence}

The critical layer in Section~\ref{sec:dimension} is tied to
$\alpha_0$. At an arbitrary fixed confidence level, inversion of the same
expansion leads to a different dimension order. For $0<\alpha<1$ and
$m\ge2$, let $r_{m,\alpha}$ be the unique solution of
\begin{equation}
q_m(r_{m,\alpha})=\alpha,
\qquad
\rho_\alpha=\{\Phi^{-1}(1-\alpha/2)\}^2.
\label{eq:5.1}
\end{equation}
The directional density induced by the cube is positive on the sphere away
from lower-dimensional boundaries. Hence $q_m$ is continuous and strictly
decreasing on $(0,m)$, so the solution is unique.

\begin{theorem}[fixed-confidence expansion]
For every fixed $0<\alpha<1$,
\begin{equation}
r_{m,\alpha}
=\rho_\alpha-\frac{3\rho_\alpha(\rho_\alpha-3)}{10m}
+\frac{\rho_\alpha(1287-369\rho_\alpha-12\rho_\alpha^2)}{1400m^2}
+o(m^{-2}).
\label{eq:5.2}
\end{equation}
If $\rho_\alpha>3$, then
\begin{equation}
r_{m,\alpha}-r_{m-1,\alpha}
=\frac{3\rho_\alpha(\rho_\alpha-3)}{10m^2}+o(m^{-2})>0
\label{eq:5.3}
\end{equation}
for all sufficiently large $m$.  If $\rho_\alpha<3$, the eventual sign is
reversed.  At $\rho_\alpha=3$,
\begin{equation}
r_{m,\alpha_0}
=3+\frac{27}{175m^2}+\frac{d_{3,0}}{m^3}+o(m^{-3})
\label{eq:5.4}
\end{equation}
for a finite constant $d_{3,0}$, and
\begin{equation}
r_{m,\alpha_0}-r_{m-1,\alpha_0}
=-\frac{54}{175m^3}+o(m^{-3}).
\label{eq:5.5}
\end{equation}
\end{theorem}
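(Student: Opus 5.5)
The plan is to invert the uniform expansion (\ref{eq:3.24}), now taken on a compact neighborhood $K$ of $\rho_\alpha$. Since $N$ is strictly decreasing and $N(\rho_\alpha)=\alpha$, the first step is to localize $r_{m,\alpha}$. The uniform bound $\sup_K|q_m-N|=O(m^{-1})$ shows that $q_m(\rho_\alpha\pm\eta)-\alpha$ has the sign of $N(\rho_\alpha\pm\eta)-\alpha$ for large $m$. Monotonicity of $q_m$ then forces $r_{m,\alpha}\in(\rho_\alpha-\eta,\rho_\alpha+\eta)$, so $r_{m,\alpha}\to\rho_\alpha$. With this established, put $r_{m,\alpha}=\rho_\alpha+\Delta_m$ and substitute into (\ref{eq:3.24}). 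Using $N'(\rho_\alpha)=-\phi(\sqrt{\rho_\alpha})/\sqrt{\rho_\alpha}\neq0$, the leading balance $N'\Delta_m+A/m=O(\Delta_m^2+m^{-2})$ first gives $\Delta_m=O(m^{-1})$. It then gives $\Delta_m=c_1/m+O(m^{-2})$ with $c_1=-A(\rho_\alpha)/N'(\rho_\alpha)$. Since $\phi H_3(x)=\phi(x)\,x(x^2-3)$ at $x=\sqrt{\rho_\alpha}$, this is $c_1=-3\rho_\alpha(\rho_\alpha-3)/10$.

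For the second order I will write $\Delta_m=c_1/m+c_2/m^2+e_m$ and Taylor-expand $N$, $A$ and $B$ about $\rho_\alpha$. This requires $N,A,B$ to be $C^2$, $C^1$ and continuous respectively near $\rho_\alpha$; the explicit Hermite forms in (\ref{eq:3.21}) provide this. Collecting the $m^{-2}$ terms gives
\[
N'c_2+\tfrac12N''c_1^2+A'c_1+B=0 .
\]
Here $B(r)$ is read off (\ref{eq:3.21}) and (\ref{eq:3.18}) as the $n^{-2}$ coefficient, namely $\phi(x)\{-\tfrac{6}{175}H_3+\tfrac{6}{35}H_5+\tfrac{9}{400}H_7\}(x)$ at $x=\sqrt r$. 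Solving for $c_2$ and simplifying the polynomial in $\rho_\alpha$ should give $\rho_\alpha(1287-369\rho_\alpha-12\rho_\alpha^2)/1400$. As a check, at $\rho_\alpha=3$ one has $c_1=0$, so $c_2=-B(3)/N'(3)=\sqrt3\cdot 9\sqrt3/175=27/175$. This agrees with the formula, since $3(1287-1107-108)/1400=27/175$. The remainder $e_m$ is $o(m^{-2})$ because $R_m=o(m^{-3})$ and the Taylor remainders are $O(m^{-3})$. Then (\ref{eq:5.3}) follows by subtracting the expansions at $m$ and $m-1$. Indeed $1/m-1/(m-1)=-m^{-2}+O(m^{-3})$, the $c_2$ terms differ by $O(m^{-3})$, and the two $o(m^{-2})$ remainders are absorbed. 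This yields $-c_1/m^2+o(m^{-2})$, which has the sign of $\rho_\alpha-3$.

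The critical case $\rho_\alpha=3$ needs one more order, and this is where I expect the main difficulty. Carrying the inversion to $m^{-3}$ uses $C(3)$, the derivatives $A'(3)$ and $B'(3)$, and $N''(3)$ and $N'''(3)$. Since $c_1=0$ here, the cubic term reduces to $N'd_{3,0}+B'(3)c_2\cdot 0+A'(3)c_2+C(3)=0$ plus vanishing cross terms. This makes $d_{3,0}=-\{A'(3)c_2+C(3)\}/N'(3)$ finite, with error $o(m^{-3})$ from $\sup_K|R_m|=o(m^{-3})$. The delicate points are that $C$ must be finite and continuous near $3$ and that the remainder in (\ref{eq:3.24}) must hold uniformly on $K$; both are asserted in (\ref{eq:3.24}) via (\ref{eq:3.23}), so I will invoke them rather than compute $C$. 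Finally, differencing gives $c_2\{m^{-2}-(m-1)^{-2}\}=-2c_2m^{-3}+O(m^{-4})$. The $d_{3,0}$ terms differ by $O(m^{-4})$, and the remainders are $o(m^{-3})$. This gives (\ref{eq:5.5}) with $-54/175$.
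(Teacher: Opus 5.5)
Your proposal is correct and follows the paper's route. Like the paper, you substitute the root ansatz into the compact-uniform expansion (\ref{eq:3.24}), match the $m^{-1}$ and $m^{-2}$ coefficients (your balance equation is exactly (\ref{eq:A.12})), difference at $m$ and $m-1$, and retain one further order when $\rho_\alpha=3$. Your monotonicity localization of $r_{m,\alpha}$ and your explicit identification $d_{3,0}=-\{A'(3)c_2+C(3)\}/N'(3)$ make precise what the paper's proof leaves implicit.
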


\begin{proof}
The explicit second-order coefficient in (\ref{eq:3.24}) is
\begin{equation}
B(r)=\phi(\sqrt r)\sqrt r\,
\frac{63r^3-843r^2+1719r+873}{2800};
\label{eq:5.6}
\end{equation}
its derivation is recorded in Appendix~\ref{app:coefficients}.  Write
\[
r_{m,\alpha}=\rho_\alpha+d_1m^{-1}+d_2m^{-2}+o(m^{-2})
\]
and use the compact-uniform form of (\ref{eq:3.24}) on a neighborhood of
$\rho_\alpha$.  Substitution in
$N(r)+m^{-1}A(r)+m^{-2}B(r)+o(m^{-2})$, together with
$N(\rho_\alpha)=\alpha$, gives
\begin{equation}
d_1=-\frac3{10}\rho_\alpha(\rho_\alpha-3),
\qquad
d_2=\frac{\rho_\alpha(1287-369\rho_\alpha-12\rho_\alpha^2)}{1400}.
\label{eq:5.7}
\end{equation}
The remainder is uniform for the moving two-sided sets, so the root expansion
is valid.  Subtraction in dimensions $m$ and $m-1$ proves (\ref{eq:5.3}) and the
reversed case.  When $\rho_\alpha=3$, $d_1=0$ and $d_2=27/175$.  Retaining one
further term shows that the unknown third-order root coefficient contributes
only $O(m^{-4})$ to the successive difference, proving (\ref{eq:5.4})--(\ref{eq:5.5}).
\end{proof}

At the usual two-sided $5\%$ level,
\begin{equation}
\rho_{.05}=\{\Phi^{-1}(0.975)\}^2=3.841458820694\ldots,
\label{eq:5.8}
\end{equation}
so the uniform critical cone parameters are eventually increasing toward
$\rho_{.05}$. In particular, there is an integer $m_0$ such that, whenever
$n>j\ge m_0$,
\begin{equation}
q_j(r_{n,.05})<q_n(r_{n,.05})=.05.
\label{eq:5.9}
\end{equation}
This is an equal-scale dimension comparison.  General Khintchine mixtures
randomize all sample scales and are not ordered by (\ref{eq:5.9}); the fixed
$5\%$ calibration is therefore outside the conclusion of Section~\ref{sec:dimension}.

\appendix

\section{Coefficient calculations}\label{app:coefficients}

Let $\mu_j=\E V^j$, where $V=X^2$ for a variance-one uniform $X$.  Thus
\begin{equation}
\mu_1=1,
\qquad
\mu_2=\frac95,
\qquad
\mu_3=\frac{27}{7}.
\label{eq:A.1}
\end{equation}
With $D_n=\sum_i(V_i-1)$,
\begin{equation}
\E\sum_iV_i^2=n\mu_2,
\qquad
\E\left(\sum_iV_i^2D_n\right)=n(\mu_3-\mu_2),
\label{eq:A.2}
\end{equation}
and
\begin{equation}
\E\left(\sum_iV_i^2D_n^2\right)
=n^2\mu_2(\mu_2-1)+O(n).
\label{eq:A.3}
\end{equation}
Expansion of $(n+D_n)^{-2}$ gives
\begin{equation}
\E A_2
=\frac{\mu_2}{n}
+\frac{-2(\mu_3-\mu_2)+3\mu_2(\mu_2-1)}{n^2}
+O(n^{-3})
=\frac9{5n}+\frac{36}{175n^2}+O(n^{-3}).
\label{eq:A.4}
\end{equation}
Similarly,
\begin{equation}
\E A_3=\frac{27}{7n^2}+O(n^{-3}),
\qquad
\E(A_2^2)=\frac{81}{25n^2}+O(n^{-3}).
\label{eq:A.5}
\end{equation}
Write
\begin{equation}
\kappa_{4,n}=\frac{a_1}{n}+\frac{a_2}{n^2}+O(n^{-3}),
\qquad
\kappa_{6,n}=\frac{b_2}{n^2}+O(n^{-3}),
\label{eq:A.6}
\end{equation}
where
\begin{equation}
a_1=-\frac{18}{5},
\qquad
a_2=-\frac{72}{175},
\qquad
b_2=\frac{432}{7}.
\label{eq:A.7}
\end{equation}
The coefficient of $n^{-2}$ in (\ref{eq:3.21}) is therefore
\begin{equation}
B(r)=\phi(x)\left\{
-\frac6{175}H_3(x)
+\frac6{35}H_5(x)
+\frac9{400}H_7(x)
\right\},
\qquad x=\sqrt r.
\label{eq:A.8}
\end{equation}
Using
\begin{equation}
\begin{split}
H_3(x)&=x^3-3x,\\
H_5(x)&=x^5-10x^3+15x,\\
H_7(x)&=x^7-21x^5+105x^3-105x,
\end{split}
\label{eq:A.9}
\end{equation}
one obtains (\ref{eq:5.6}), and at $r=3$,
\begin{equation}
B(3)=\frac{9\sqrt3\phi(\sqrt3)}{175}.
\label{eq:A.10}
\end{equation}

For completeness, substitute
$r_{m,\alpha}=\rho+d_1m^{-1}+d_2m^{-2}+o(m^{-2})$ in
$N(r)+m^{-1}A(r)+m^{-2}B(r)$.  The coefficients of $m^{-1}$ and $m^{-2}$
are
\begin{equation}
N'(\rho)d_1+A(\rho)
\label{eq:A.11}
\end{equation}
and
\begin{equation}
N'(\rho)d_2+\frac12N''(\rho)d_1^2+A'(\rho)d_1+B(\rho),
\label{eq:A.12}
\end{equation}
respectively, where
\begin{equation}
N'(r)=-\frac{\phi(\sqrt r)}{\sqrt r},
\qquad
N''(r)=\frac{\phi(\sqrt r)(r+1)}{2r^{3/2}}.
\label{eq:A.13}
\end{equation}
Setting (\ref{eq:A.11})--(\ref{eq:A.12}) equal to zero yields (\ref{eq:5.7}).

\section{The case $n=6$}\label{app:n6}

At $n=6$ one can also obtain a finite comparison independent of the
asymptotic argument.  Let
\begin{equation}
r_*=\frac{61}{20},
\qquad
t_*^2=\frac{5r_*}{6-r_*}=\frac{305}{59}.
\label{eq:B.1}
\end{equation}
The calculation below proves
\begin{equation}
q_5(r_*)>\frac{51}{625},
\qquad
\frac{163}{2000}<q_6(r_*)<\frac{1631}{20000},
\label{eq:B.2}
\end{equation}
and hence
\begin{equation}
q_5(r_*)-q_6(r_*)>\frac{1}{20000},
\qquad
q_6(r_*)<0.0832<\alpha_0.
\label{eq:B.3}
\end{equation}
It also gives an explicit two-scale density with a strictly larger
six-observation tail.

Write $x=se+y$, where $e=n^{-1/2}\one$, $y\perp e$, $y=\rho u$, and
\begin{equation}
c_n=\sqrt{\frac r{n-r}}.
\label{eq:B.4}
\end{equation}
The cone condition is $|s|>c_n\rho$.  If
\[
A=\max_i u_i,
\qquad B=-\min_i u_i,
\qquad a=\sqrt n,
\]
integration first in $s$ and then in $\rho$ gives, on the positive half of
the cone,
\begin{equation}
I_n(A,B)=\frac{a^n}{n(n-1)}
\begin{cases}
(aA+c_n)^{-(n-1)},&aB-c_n\le aA+c_n,\\[3pt]
\displaystyle
\frac{2^n}{\{a(A+B)\}^{n-1}}-(aB-c_n)^{-(n-1)},
&aB-c_n>aA+c_n.
\end{cases}
\label{eq:B.5}
\end{equation}
The negative half gives the same integral after $u\mapsto-u$.

Partition the unit sphere in $\one^\perp$ into coordinate-order chambers.  On
one chamber use the projective section
\[
w=(0,y_1,\ldots,y_{n-2},1),
\qquad 0<y_1<\cdots<y_{n-2}<1,
\]
and put $v=w-\bar w\one$, $R=\lVert v\rVert$, and $u=v/R$.  Central
projection has Jacobian
\begin{equation}
d\sigma(u)=\frac{dy_1\cdots dy_{n-2}}{\sqrt n\,R^{n-1}}.
\label{eq:B.6}
\end{equation}
Moreover $A=\tau_n/R$.  Summing the $n!$ chambers and the two halves,
dividing by the cube volume, and replacing the ordered-simplex integral by a
unit-cube expectation gives, for $n=5,6$ and $r=r_*$,
\begin{equation}
q_n(r)=C_n\E\{\sqrt n\,\tau_n+c_nR_n\}^{-(n-1)},
\qquad
C_n=\frac{n^{(n-1)/2}}{2^{n-1}},
\label{eq:B.7}
\end{equation}
where $Y_1,\ldots,Y_{n-2}$ are i.i.d.\ uniform on $[0,1]$ and
\begin{equation}
R_n^2=1+\sum_jY_j^2-\frac{(1+\sum_jY_j)^2}{n},
\qquad
\tau_n=\frac{n-1-\sum_jY_j}{n}.
\label{eq:B.8}
\end{equation}
The second branch in (\ref{eq:B.5}) is absent in these two cases.  On the zero-sum unit
sphere,
\begin{equation}
B-A\le\frac{n-2}{\sqrt{n(n-1)}},
\label{eq:B.9}
\end{equation}
and the second branch would require $\sqrt n(B-A)>2c_n$.  It is therefore
excluded by
\begin{equation}
\frac{(n-2)^2}{n(n-1)}\le\frac{4r}{n(n-r)},
\qquad n=5,6.
\label{eq:B.10}
\end{equation}
At $r=61/20$, the two rational comparisons are
$9/20<244/195$ and $8/15<122/177$.

Put $z_j=Y_j-1/2$, $Z=\sum_jz_j$, and
\begin{equation}
W=\sum_jz_j^2-\frac{Z^2}{n},
\qquad
b_n(W)=\frac n2+\lambda_n\sqrt{\frac12+W},
\qquad
\lambda_n^2=\frac{nr}{n-r}.
\label{eq:B.11}
\end{equation}
Expanding the chamber denominator gives the nonnegative series
\begin{equation}
q_n(r)=\frac{n^{n-1}}{2^{n-1}}
\sum_{k=0}^\infty \binom{n+2k-2}{2k}
\E\{Z^{2k}b_n(W)^{-(n-1+2k)}\}.
\label{eq:B.12}
\end{equation}
Indeed,
$\sqrt n\,\tau_n+c_nR_n=(b_n(W)-Z)/\sqrt n$ and
$|Z|<(n/2)\le b_n(W)$; symmetry removes the odd powers.

Let
\begin{equation}
q_c=\frac{n+2}{8},
\quad
w_c=\frac{n-2}{8},
\quad
\nu=\frac{W-w_c}{q_c},
\quad
\rho_n=\frac{n-2}{n+2}.
\label{eq:B.13}
\end{equation}
For each $s\ge1$ there are nonnegative coefficients $c_{s,j}$ such that
\begin{equation}
b_n(W)^{-s}=A_c^{-s}\sum_{j=0}^\infty c_{s,j}(-\nu)^j,
\qquad
\sum_{j=0}^\infty c_{s,j}z^j
=\{1-\beta(1-\sqrt{1-z})\}^{-s},
\label{eq:B.14}
\end{equation}
where $A_c=n/2+\lambda_n\sqrt{q_c}$ and
$\beta=(A_c-n/2)/A_c$.  Since $|\nu|\le\rho_n<1$,
\begin{equation}
\left|b_n(W)^{-s}-A_c^{-s}\sum_{j=0}^Jc_{s,j}(-\nu)^j\right|
\le\left(\frac n2\right)^{-s}\rho_n^{J+1}.
\label{eq:B.15}
\end{equation}
The coefficients are rational combinations of
\begin{equation}
a_\ell=[z^\ell](1-\sqrt{1-z})
=\frac{\binom{2\ell}{\ell}}{(2\ell-1)4^\ell},
\qquad \ell\ge1.
\label{eq:B.16}
\end{equation}
All mixed moments in the truncated expression are rational.  If
\begin{equation}
M_m(a,b)=\E\left\{
\left(\sum_{i=1}^mz_i\right)^a
\left(\sum_{i=1}^mz_i^2\right)^b\right\},
\label{eq:B.17}
\end{equation}
independence gives the recursion
\begin{equation}
M_m(a,b)=\sum_{i=0}^a\sum_{j=0}^b
\binom{a}{i}\binom{b}{j}M_{m-1}(i,j)\,
\mu_{a-i+2(b-j)},
\label{eq:B.18}
\end{equation}
where
\begin{equation}
M_0(a,b)=\one_{\{a=b=0\}},
\qquad
\mu_{2h}=\frac1{(2h+1)2^{2h}},
\qquad
\mu_{2h+1}=0.
\label{eq:B.19}
\end{equation}
Expanding $W=\sum z_i^2-Z^2/n$ in (\ref{eq:B.18}) gives every moment in (\ref{eq:B.12}).

For the positive $k$-tail in (\ref{eq:B.12}), put
\begin{equation}
B_k=\binom{n+2k-2}{2k},
\qquad
b_{\min}=\frac n2+\frac{\lambda_n}{\sqrt2},
\qquad
v=\frac{n-2}{2b_{\min}},
\label{eq:B.20}
\end{equation}
and
\begin{equation}
\gamma_L=
\frac{(n-1+2L)(n+2L)}{(2L+1)(2L+2)}.
\label{eq:B.21}
\end{equation}
Because $B_{k+1}/B_k$ decreases with $k$, the tail beginning at $L$ is at
most
\begin{equation}
\frac{n^{n-1}}{2^{n-1}}b_{\min}^{-(n-1)}
\frac{B_Lv^{2L}}{1-\gamma_Lv^2},
\qquad \gamma_Lv^2<1.
\label{eq:B.22}
\end{equation}
Taking $J=24$, retaining $0\le k\le15$, and using $L=16$ gives (\ref{eq:B.2})--(\ref{eq:B.3}).
The resulting certificate is displayed in Table~\ref{tab:finite-certificate}.

\begin{table}[!b]
\caption{Finite enclosures in the calculation at $r_*=61/20$. The two error
columns bound, respectively, truncation of (\ref{eq:B.14}) after $J=24$ and the
positive $k$-tail in (\ref{eq:B.12}) beginning at $L=16$.}
\label{tab:finite-certificate}
\centering
\small
\begin{tabular}{@{}ccccc@{}}
\hline
$n$ & Truncated sum & $J$-error & $k$-tail & Certified enclosure for $q_n(r_*)$ \\
\hline
$5$ & $0.08164905813136$ & $1.029\cdot10^{-9}$ & $4.652\cdot10^{-13}$
& $[0.08164905710,\,0.08164905917]$ \\
$6$ & $0.08154616628057$ & $6.035\cdot10^{-8}$ & $6.636\cdot10^{-9}$
& $[0.08154610593,\,0.08154623327]$ \\
\hline
\end{tabular}
\end{table}

The displayed endpoints are rounded outward and absorb the outward dyadic
rounding of all square roots. Every square root used to form the truncated
sums and bounds was enclosed by consecutive dyadic rationals, with the
ordering checked by integer squaring. The enclosures in
Table~\ref{tab:finite-certificate} use denominator $2^{48}$; denominator
$2^{16}$ already suffices for the coarser rational comparisons in
(\ref{eq:B.2})--(\ref{eq:B.3}).

It remains to compare with $\alpha_0$.  Let
$R(x)=\bar\Phi(x)/\phi(x)$ be the Mills ratio.  By the alternating bounds for
Laplace's continued fraction in \citet[(3.1)]{lee1992}, the odd convergent
$L_{11}$ is a lower bound, and at $x=\sqrt3$ it equals
\begin{equation}
R(\sqrt3)>
\cfrac1{\sqrt3+\cfrac1{\sqrt3+\cfrac2{\sqrt3+\cdots+
\cfrac{11}{\sqrt3}}}}
=\frac{4412\sqrt3}{16341}.
\label{eq:B.23}
\end{equation}
Together with $\pi<355/113$ and the rational Taylor upper bound
\begin{equation}
e^{3/2}<
\sum_{j=0}^{30}\frac{(3/2)^j}{j!}
+\frac{(3/2)^{31}/31!}{1-3/64},
\label{eq:B.24}
\end{equation}
this yields
\begin{equation}
\alpha_0=2\phi(\sqrt3)R(\sqrt3)
>\frac{333}{4000}>\frac{52}{625}.
\label{eq:B.25}
\end{equation}

Finally let $\varepsilon=10^{-14}$ and $p=10^{-5}$, and use the density
$f_{p,\varepsilon}$ in (\ref{eq:2.8}).  If one scale is $\varepsilon$ and five are one,
write the resulting cone probability as $Q_\varepsilon$.  Changing the small
coordinate to zero perturbs the five-dimensional cone polynomial by at most
\begin{equation}
\eta=10\varepsilon+(r_*-1)\varepsilon^2.
\label{eq:B.26}
\end{equation}
Conditioning on four unit-scale coordinates and bounding the interval on
which the remaining quadratic lies within $\eta$ of zero gives
\begin{equation}
Q_\varepsilon-q_6(r_*)
\ge q_5(r_*)-q_6(r_*)-\sqrt{\frac{2\eta}{r_*-1}}
>\frac1{25000},
\label{eq:B.27}
\end{equation}
since $\sqrt{2\eta/(r_*-1)}<1/(3\cdot10^6)$.
The probability of two or more small scales is at most $\binom{6}{2}p^2$.
Conditioning on the number of small scales therefore gives
\begin{equation}
\begin{split}
\Pp_{f_{p,\varepsilon}}\{|T_6|>t_*\}
-\Pp_U\{|T_6|>t_*\}
&>\frac{6p(1-p)^5}{25000}-15p^2\\
&>\frac1{2\cdot10^9}.
\end{split}
\label{eq:B.28}
\end{equation}
Convolution with a sufficiently narrow compactly supported $C^\infty$
symmetric decreasing approximate identity preserves the strict inequality.

\section*{Acknowledgments}
\textbf{AI tool disclosure.}
OpenAI's ChatGPT was used for literature-search support, additional checks of
selected derivations and the finite calculation in Appendix~\ref{app:n6}, and
language and LaTeX editing. The research question, mathematical results, and
proofs were developed by the author. The author reviewed all AI-assisted
material incorporated into the manuscript and assumes full responsibility
for its correctness, originality, and attribution.

\end{document}